\theoremstyle{definition}
\newtheorem{thm}{Theorem}[section]
\newtheorem{cor}[thm]{Corollary}
\newtheorem{prop}[thm]{Proposition}
\newtheorem{rmk}[thm]{Remark}
\def\id{\mathrm{id}}
\def\tr{\mathrm{Tr}}
\def\ccc{\mathbb{C}}
\def\C{\mathbb{C}}
\def\rr{\mathbb{R}}
\def\R{\mathbb{R}}
\def\pp{\mathbb{P}}
\def\clo{\mathcal{O}}
\def\frI{\mathfrak{I}}
\def\pt{\partial}
\def\p{\partial}
\def\bpt{\bar{\pt}}
\def\ddb{\pt\bpt}
\def\dbar{\bar{\pt}}
\def\ud{\mathrm{d}}
\def\bz{\bar{z}}
\def\bzeta{\bar{\zeta}}
\def\bmu{\bar{\mu}}
\def\vol{\mathrm{vol}}
\def\spann{\mathrm{Span}}
\def\ind{\mathrm{Ind}}
\begin{document}

\title{\textbf{A Construction of Infinitely Many Solutions to the Strominger System}}
\author{Teng Fei, Zhijie Huang and Sebastien Picard}

\date{}

\maketitle{}

\tableofcontents

\section{Introduction}

The Strominger system \cite{hull1986b, strominger1986} is a system of partial differential equations characterizing the compactification of heterotic superstrings with torsion. Mathematically speaking, we may think of the Strominger system as a generalization of Ricci-flat metrics on non-K\"ahler Calabi-Yau 3-folds, which is simultaneously coupled with the Hermitian Yang-Mills equation on a gauge bundle.

Let $X$ be a complex 3-fold, preferably compact, with holomorphically trivial canonical bundle. We fix a nowhere vanishing holomorphic (3,0)-form $\Omega$ on $X$. Let $\omega$ be a Hermitian metric on $X$, and denote by $\|\Omega\|_{\omega}$ the norm of $\Omega$ with respect to the metric $\omega$. In addition, let $(E,h)$ be a holomorphic vector bundle over $X$ equipped with a Hermitian metric. We denote by $R$ and $F$ the endomorphism-valued curvature 2-forms of the holomorphic tangent bundle $T^{1,0}X$ and $E$ respectively. Finally, let $\alpha'\in\rr$ be a constant. We can write down the Strominger system as follows \cite{li2005}:
\begin{eqnarray}
\label{hym}F\wedge\omega^2=0,\quad F^{0,2}=F^{2,0}=0,\\
\label{ac}i\pt\bpt\omega=\frac{\alpha'}{4}(\tr(R\wedge R)-\tr(F\wedge F)),\\
\label{cb}\ud(\|\Omega\|_\omega\cdot\omega^2)=0.
\end{eqnarray}
In the literature, Equations (\ref{hym}), (\ref{ac}) and (\ref{cb}) are known as the Hermitian Yang-Mills equation, the anomaly cancellation equation and the conformally balanced equation respectively. In this paper, we will only use the Chern connection to compute the curvatures $R$ and $F$ and work with positive $\alpha'$.

It is not hard to see that when $\omega$ is a K\"ahler metric, by Yau's theorem \cite{yau1978} the full Strominger system can be solved if we take $\omega$ to be Ricci-flat and simultaneously embed the spin connection into the Yang-Mills connection, meaning to set $R=F$. Such a solution corresponds to the torsion-free compactification of superstrings \cite{candelas1985}. However, the Strominger system allows for more general backgrounds than K\"ahler Calabi-Yau manifolds. In fact, the interest of the Strominger system is to solve it on non-K\"ahler Calabi-Yau 3-folds, where we say that a complex manifold is \emph{non-K\"ahler} if it does not admit any K\"ahler metric at all. Currently the only constraint is that $X$ needs to admit a balanced metric \cite{li2005}, i.e., a Hermitian metric whose K\"ahler form is co-closed.

Unfortunately, there are not many non-K\"ahler solutions to the Strominger system known so far. It appears to the authors that the only compact examples are the notable Fu-Yau solution \cite{fu2007,fu2008, phong2017c} and a few parallelizable examples, see \cite{fernandez2009, grantcharov2011, fei2015, otal2017} and references therein. In terms of solutions on noncompact spaces, we also have local models constructed in \cite{fu2009, fernandez2014, fei2015b, halmagyi2016} etc.

As for compact K\"ahler Calabi-Yau manifolds (treated as solutions to the Strominger system), it is widely speculated that in each dimension there are only finitely many deformation types and hence finitely many sets of Hodge numbers. Moreover, there are no explicit expressions for Calabi-Yau metrics except for the flat case.

In this paper, we demonstrate that the non-K\"ahler world of solutions to the Strominger system is considerably different. More precisely, we construct explicit smooth solutions to the Strominger system on compact non-K\"ahler Calabi-Yau 3-folds with infinitely many topological types and sets of Hodge numbers. The following is a summary of our construction:

\begin{thm}~\\
Let $\Sigma$ be a compact Riemann surface of genus $g \geq 3$ with a basepoint-free theta characteristic. Let $M$ be a compact hyperk\"ahler 4-manifold. The generalized Calabi-Gray construction gives rise to a compact non-K\"ahler Calabi-Yau 3-fold $X$, which is the total space of a fibration $p: X \rightarrow \Sigma$ with fiber $M$, admitting explicit smooth solutions to the Strominger system with gauge bundle $E= \Omega_{X / \Sigma}$ taken to be the relative cotangent bundle of the fibration. If $M=T^4$, we may also take $E$ to be any flat vector bundle.
\end{thm}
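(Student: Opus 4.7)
The plan is to build on the Calabi--Gray ansatz for the complex threefold and then reduce each of the three equations of the Strominger system to a single scalar equation on the base $\Sigma$, which will be solvable thanks to the genus hypothesis $g\geq 3$.

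First I would describe $X$ and its complex geometry. A basepoint-free theta characteristic $L$ on $\Sigma$ (so $L^{\otimes 2}\cong K_\Sigma$) produces a holomorphic map $\phi:\Sigma\to \pp^1$ with no base points. Identifying $\pp^1$ with the twistor sphere of complex structures compatible with a fixed hyperk\"ahler metric $g_M$ on $M$, I would topologically form $X=M\times\Sigma$ and equip it with the complex structure whose restriction to the fiber $M_z=p^{-1}(z)$ is the twistor complex structure $J_{\phi(z)}$. A short computation using that $\phi$ factors through a section of $K_\Sigma^{1/2}$ shows $K_X$ is holomorphically trivial with an explicit nowhere-vanishing $(3,0)$-form $\Omega$ built from $\phi$ and the fiberwise holomorphic symplectic forms. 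Since $\phi$ has nontrivial degree, $X$ inherits a nonzero class obstructing K\"ahlerness, so $X$ is non-K\"ahler.

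Next I would choose a one-parameter family of metric ans\"atze of the form
\begin{equation*}
\omega_f \;=\; e^{2f(z)}\, p^*\omega_\Sigma \;+\; e^{f(z)}\, \omega_M^{(z)},
\end{equation*}
where $\omega_\Sigma$ is a fixed metric on $\Sigma$ and $\omega_M^{(z)}$ is the K\"ahler form of $g_M$ in complex structure $J_{\phi(z)}$. Because $g_M$ is hyperk\"ahler, $\omega_M^{(z)}$ is closed and Ricci-flat in every twistor complex structure; this forces $\|\Omega\|_{\omega_f}$ to depend only on $f$, so (\ref{cb}) becomes the algebraic relation $\|\Omega\|_{\omega_f}\,e^{2f}=\mathrm{const}$, which fixes the conformal factor appearing in the balanced equation automatically.

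For the Hermitian Yang--Mills condition (\ref{hym}), I would exploit that $E=\Omega_{X/\Sigma}$ carries, fiberwise, the cotangent bundle of a hyperk\"ahler manifold. Its Chern curvature $F$ then has a purely fiber-direction component equal to the fiber Ricci form, which vanishes, plus a horizontal correction determined by $\phi$. Pairing with $\omega_f^2$ reduces $F\wedge\omega_f^2$ to the fiberwise trace of this correction, which should vanish identically. The case of a flat bundle over $M=T^4$ is even simpler: both $F$ and $\tr(F\wedge F)$ vanish outright.

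The main obstacle, and the focus of the bulk of the argument, is the anomaly cancellation equation (\ref{ac}). I would carefully compute $\tr(R\wedge R)$ for the Chern connection of $\omega_f$, observing that the mixed horizontal/vertical curvature is governed by $d\phi$, and that $\tr(F\wedge F)$ similarly reduces to a $(2,2)$-form supported over $\Sigma$. After cancellations coming from the hyperk\"ahler identities and from the fact that $\phi$ is holomorphic, the identity (\ref{ac}) reduces to an elliptic PDE for $f$ on $\Sigma$ of the schematic form
\begin{equation*}
\Delta_\Sigma f \;=\; A\, e^{-f} \;+\; B\, |d\phi|^2 \;+\; C,
\end{equation*}
where the constants and the right-hand side depend on $\alpha'$, the genus $g$, and the degree of $\phi$. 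The sign of $B$ and the integrated Gauss--Bonnet obstruction are controlled by $g\geq 3$, so solvability follows from a Kazdan--Warner-type existence theorem on $\Sigma$ (or, for suitable $\alpha'$, from an explicit closed-form solution). Assembling $\omega_f$ with this $f$ gives the desired Strominger solution.
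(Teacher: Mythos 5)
Your setup --- the twistor-type complex structure on $M\times\Sigma$ induced by the map $\varphi:\Sigma\to\ccc\pp^1$ coming from the theta characteristic, the ansatz $\omega_f=e^{2f}\widehat{\omega}+e^f\omega'$, and the observation that the conformally balanced and Hermitian Yang--Mills equations hold essentially for free --- matches the paper. (One small correction there: the conformally balanced equation does not ``fix the conformal factor''; it is satisfied identically for \emph{every} $f$, so all of the freedom in $f$ is still available when you reach the anomaly equation.) The genuine gap is in your treatment of the anomaly cancellation equation. After the curvature computation, (\ref{ac}) does not reduce to a semilinear scalar equation $\Delta_\Sigma f=Ae^{-f}+B|d\varphi|^2+C$ to which Kazdan--Warner theory could apply. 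It reduces to the $(2,2)$-form equation $i\partial\bar{\partial}(u\,\omega')=0$ with $u=e^f+\tfrac{\alpha'\kappa}{2e^f}$, and because $\omega'$ is not closed (one has $i\partial\bar{\partial}\omega'=-\kappa\widehat{\omega}\wedge\omega'$), this becomes the \emph{linear} Schr\"odinger equation $(-\Delta+2\kappa)u=0$: the auxiliary function $u$ must lie in the kernel of the stability operator $-\Delta-\|\nabla\varphi\|^2$. The problem therefore decouples into a linear PDE for $u$ together with the quadratic algebraic relation $e^{2f}-u\,e^f+\tfrac{\alpha'\kappa}{2}=0$ determining $f$; there is no semilinear elliptic equation to solve, and no Kazdan--Warner-type integrability condition governs existence.

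What your proposal misses entirely is the actual obstruction. Solving the quadratic gives $e^f=\tfrac{1}{2}\bigl(u+\sqrt{u^2-2\alpha'\kappa}\bigr)$, which is smooth and positive precisely when $u>0$ at the zeros of $\kappa$, i.e.\ at the ramification points of $\varphi$. Taking $u$ in the span of the known kernel elements $\alpha,\beta,\gamma$, this positivity is the ``hemisphere condition'': the branch points of $\varphi$ in $\ccc\pp^1$ must lie in an open hemisphere. This is a nontrivial constraint --- for genus $3$ the paper shows solvability with this ansatz holds \emph{if and only if} the hemisphere condition does, and it fails for the Gauss maps of Schwarz' P and D surfaces --- and the theorem is rescued only by the observation that one may always compose $\varphi$ with a M\"obius transformation of $\ccc\pp^1$ to push all branch points into a hemisphere, which preserves the pullback condition. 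Your argument has no mechanism to detect or arrange this pointwise positivity at the ramification divisor, so as written the existence step does not go through.
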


Recently D.H. Phong, X.-W. Zhang and the third-named author showed in a series of papers \cite{phong2018b, phong2018d, phong2018e, phong2017b} that the Anomaly flow is a powerful tool in studying the Strominger system. Using our ansatz, the Anomaly flow reduces to an interesting parabolic equation on Riemann surfaces. This aspect is investigated in \cite{fei2017d}.
\medskip
\par \noindent\textbf{Acknowledgements} The authors would like to thank Prof. D.H. Phong and Prof. S.-T. Yau for their constant encouragement and help. The authors are also indebted to R.M. Schoen, L.-S. Tseng, B.-S. Wu and X.-W. Zhang for useful discussions. The third-named author was supported in part by National Science Foundation grant DMS-12-66033.

\section{Reduction of the Strominger System and Its Solutions}

\subsection{The Generalized Calabi-Gray Construction}\label{construction}

To obtain interesting solutions to the Strominger system, we need first to construct compact non-K\"ahler Calabi-Yau 3-folds with balanced metrics. A class of nice such examples is what we call the generalized Calabi-Gray construction \cite{fei2016}, which can be phrased as follows.

Let $(M,g)$ be a compact hyperk\"ahler 4-manifold. It is well-known that $M$ is either a flat 4-torus or a K3 surface with a Calabi-Yau metric. We denote by $I$, $J$ and $K$ the compatible complex structures with $IJK=-\id$ and write $\omega_I$, $\omega_J$ and $\omega_K$ for the corresponding K\"ahler forms, defined as usual by $\omega_I(v_1,v_2)=g(Iv_1,v_2)$, $\omega_J(v_1,v_2)=g(Jv_1,v_2)$ and $\omega_K(v_1,v_2) = g(Kv_1,v_2)$. In fact, for any real numbers $\alpha$, $\beta$ and $\gamma$ such that
\[\alpha^2+\beta^2+\gamma^2=1,\]
we have a compatible complex structure $\alpha I+\beta J+\gamma K$, whose associated K\"ahler form is given by $\alpha\omega_I+\beta\omega_J+\gamma\omega_K$.

Following \cite{hitchin1987}, let us first describe the twistor space $Z$ of $M$. By stereographic projection, we may parameterize $S^2=\{(\alpha,\beta,\gamma)\in\rr^3:\alpha^2+\beta^2+\gamma^2=1\}$ by $\zeta\in\ccc\pp^1$ via
\begin{equation}\label{stereographic}
(\alpha,\beta,\gamma)= \left(\frac{1-|\zeta|^2}{1+|\zeta|^2},\frac{\zeta+\bzeta}{1+|\zeta|^2},\frac{i(\bzeta-\zeta)}{1+|\zeta|^2}\right).
\end{equation}
Moreover, we always equip $\ccc\pp^1\cong S^2$ with the round (Fubini-Study) metric
\begin{equation}\label{fs}
\omega_{FS}=\frac{2i\ud\zeta\wedge\ud\bzeta}{(1+|\zeta|^2)^2}.
\end{equation}
The twistor space $Z$ of $M$ is defined to be the manifold $Z=\ccc\pp^1\times M$ with the tautological almost complex structure $\frI$ given by
\begin{equation}\label{hypertwistor}
\frI=j\oplus(\alpha I_x+\beta J_x+\gamma K_x)
\end{equation}
at point $(\zeta,x)\in\ccc\pp^1\times M$, where $j$ is the standard complex structure on $\ccc\pp^1$ with holomorphic coordinate $\zeta$.

In fact, we have
\begin{thm}\cite{atiyah1978, hitchin1981, michelsohn1982, hitchin1987, muskarov1989}\label{twistor}
\begin{enumerate}
\item $\frI$ is an integrable complex structure, making $(Z,\frI)$ a compact non-K\"ahler 3-fold whose natural product metric is balanced.
\item The natural projection $\pi:Z=\ccc\pp^1\times M\to\ccc\pp^1$ is holomorphic.
\item Let $\Lambda^2\Omega_{Z/\ccc\pp^1}$ be the determinant line bundle of the relative cotangent bundle associated to the holomorphic fibration $\pi$. Then the line bundle $\Lambda^2\Omega_{Z/\ccc\pp^1}\otimes\pi^*\clo(2)$ on $Z$ has a global section which defines a holomorphic symplectic form on each fiber of $\pi$.
\end{enumerate}
\end{thm}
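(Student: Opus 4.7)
The plan is to establish the three assertions in order, using the hyperk\"ahler identities on $M$ throughout. For integrability of $\frI$ in part~(a), I would exhibit a local frame of $(1,0)$-forms on $Z$: the base direction contributes $\ud\zeta$, while on each fiber the $L_\zeta := \alpha I + \beta J + \gamma K$-eigenspace of type $(1,0)$ is spanned by two $\zeta$-dependent combinations $\theta^1(\zeta), \theta^2(\zeta)$ of an orthonormal coframe of $M$ adapted to $(I,J,K)$. Since $\nabla^g I = \nabla^g J = \nabla^g K = 0$, the Nijenhuis tensor of $L_\zeta$ vanishes on each fiber for fixed $\zeta$, so the only potentially nontrivial integrability terms come from the $\ud\zeta, \ud\bzeta$ components of $\ud\theta^i$. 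Plugging in the explicit formulas \eqref{stereographic}, these reduce to a direct algebraic check that $\ud\theta^i$ lies in the ideal generated by $\ud\zeta, \theta^1, \theta^2$. This is the classical Atiyah-Hitchin-Singer/Salamon verification.

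Holomorphicity of $\pi$ is immediate from \eqref{hypertwistor}: since $\frI$ splits as $j \oplus L_\zeta$, the differential $\ud\pi$ is complex linear. For the balanced property of the product metric with K\"ahler form $\omega = \pi^*\omega_{FS} + \alpha\omega_I + \beta\omega_J + \gamma\omega_K$, I would compute $\omega^2$ directly. The hyperk\"ahler identities
\[
\omega_I^2 = \omega_J^2 = \omega_K^2 = 2\,\vol_M, \qquad \omega_I \wedge \omega_J = \omega_J \wedge \omega_K = \omega_K \wedge \omega_I = 0,
\]
together with $\alpha^2 + \beta^2 + \gamma^2 = 1$, collapse $(\alpha\omega_I + \beta\omega_J + \gamma\omega_K)^2$ to $2\,\vol_M$, which is $\zeta$-independent and therefore closed. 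The remaining cross term $2\pi^*\omega_{FS} \wedge (\alpha\omega_I + \beta\omega_J + \gamma\omega_K)$ is killed by $\ud$ on dimension grounds: its differential involves $\pi^*\omega_{FS} \wedge \ud\alpha$ and the analogous $\beta, \gamma$ contributions, each of which vanishes as a $3$-form pulled back from the $2$-dimensional base $\ccc\pp^1$. Non-K\"ahlerness of $Z$ then follows by the classical obstruction for twistor spaces of hyperk\"ahler $4$-manifolds.

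For part~(c), I would write down the candidate section explicitly,
\[
s(\zeta) = (\omega_J + i\omega_K) - 2\zeta\,\omega_I - \zeta^2(\omega_J - i\omega_K),
\]
a quadratic polynomial in $\zeta$, so that it transforms as a global section of $\Lambda^2\Omega_{Z/\ccc\pp^1} \otimes \pi^*\clo(2)$. Checking that $s(\zeta)$ restricts to a non-degenerate holomorphic $(2,0)$-form on the fiber $(\{\zeta\} \times M, L_\zeta)$ is a direct substitution using \eqref{stereographic} and $\omega_L(v_1, v_2) = g(Lv_1, v_2)$; sanity checks at $\zeta \in \{0, 1, i\}$ recover, up to nonzero scalars, the standard holomorphic symplectic forms $\omega_J + i\omega_K$, $\omega_K + i\omega_I$, $\omega_I + i\omega_J$ for the complex structures $I, J, K$ respectively. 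The main obstacle I anticipate is the bookkeeping in part~(a): the conceptual picture is clean, but keeping track of signs and of the $\zeta$-dependence of $\theta^i$ demands care. One streamlining I would try is to use $s(\zeta)$ from part~(c) as a fiberwise holomorphic datum, reducing the integrability verification of $\frI$ essentially to the base direction.
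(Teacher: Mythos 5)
The paper gives no proof of this theorem: it is stated as a known result with citations to Atiyah--Hitchin--Singer, Hitchin, Michelsohn and Mu\v{s}karov, so there is no in-paper argument to compare against. Your outline is essentially the standard argument from those references, and the computations you describe do check out: the hyperk\"ahler identities collapse $(\alpha\omega_I+\beta\omega_J+\gamma\omega_K)^2$ to $2\,\vol_M$, the cross term is killed for exactly the dimension reason you give (since $\alpha,\beta,\gamma$ are pulled back from $\ccc\pp^1$, the form $\pi^*\omega_{FS}\wedge\ud\alpha$ is a $3$-form pulled back from a real surface), and your candidate section $s(\zeta)$ passes the sanity checks, giving $\omega_J+i\omega_K$, $2i(\omega_K+i\omega_I)$ and $-2i(\omega_I+i\omega_J)$ at $\zeta=0,1,i$. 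Two caveats. First, the closing ``streamlining'' of part~(a) is not sound as stated: possessing the fiberwise nondegenerate $(2,0)$-form $s(\zeta)$ does not reduce the Nijenhuis computation to the base direction, because integrability of $\frI$ on the total space is precisely the assertion that the $\ud\zeta$, $\ud\bar{\zeta}$ components of $\ud\theta^i$ stay in the ideal generated by $\ud\zeta,\theta^1,\theta^2$, and that is the part $s(\zeta)$ does not control; keep the direct frame computation or cite the references. Second, non-K\"ahlerness is not a formal consequence of anything computed here --- it relies on Hitchin's classification of K\"ahlerian twistor spaces (only $S^4$ and $\ccc\pp^2$ qualify) --- so that appeal to the literature is doing genuine work rather than being a cosmetic citation.
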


Now let $\Sigma$ be a compact Riemann surface of genus $g$ and let $\varphi:\Sigma\to\ccc\pp^1$ be a nonconstant holomorphic map. We may treat $\varphi$ either as a nonconstant meromorphic function $\zeta$ on $\Sigma$ or, by identifying $\ccc\pp^1$ with the unit sphere in $\rr^3$, as a map $\varphi=(\alpha,\beta,\gamma)$ into $\rr^3$. These two viewpoints are related by the stereographic projection formula (\ref{stereographic}). By pulling back the holomorphic fibration $\pi:Z\to\ccc\pp^1$ over $\varphi:\Sigma\to\ccc\pp^1$, we get a holomorphic fibration $p:X=\varphi^*Z\to\Sigma$. As a complex manifold, $X$ is topologically $\Sigma\times M$ with a twisted complex structure $J_0 = j_\Sigma \oplus(\alpha I_x+\beta J_x+\gamma K_x)$. We have
\begin{prop}\cite{fei2015b, fei2016, fei2016b}
\begin{enumerate}
\item $X$ has trivial canonical bundle if and only if
\begin{equation}\label{spinor}
\varphi^*\clo(2)\cong K_\Sigma,
\end{equation}
where $K_\Sigma$ is the canonical bundle of $\Sigma$.
\item Under (a), $X$ is non-K\"ahler with balanced metrics.
\end{enumerate}
\end{prop}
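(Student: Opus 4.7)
The plan is to deduce (a) from the relative adjunction formula together with part (c) of Theorem~\ref{twistor}, and to prove (b) by pulling back the natural balanced metric on the twistor space $Z$ and arguing separately that $X$ carries no K\"ahler metric. For (a), note that since $X=\varphi^*Z$ is a fiber product, there is a canonical base-change map $\tilde\varphi:X\to Z$ with $\pi\circ\tilde\varphi=\varphi\circ p$, and the relative cotangent sheaf is preserved under base change, giving $K_{X/\Sigma}\cong\tilde\varphi^*K_{Z/\ccc\pp^1}$. Theorem~\ref{twistor}(c) supplies a nowhere vanishing section of $\Lambda^2\Omega_{Z/\ccc\pp^1}\otimes\pi^*\clo(2)$, since the holomorphic symplectic form is non-degenerate on each 2-dimensional fiber; hence $K_{Z/\ccc\pp^1}\cong\pi^*\clo(-2)$. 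Combining this with the relative adjunction $K_X\cong p^*K_\Sigma\otimes K_{X/\Sigma}$ gives
\[
K_X\;\cong\;p^*\bigl(K_\Sigma\otimes\varphi^*\clo(-2)\bigr).
\]
Because the fibers of $p$ are compact connected, $p_*\clo_X=\clo_\Sigma$, so the projection formula forces $p^*L$ to be trivial iff $L$ is; consequently $K_X$ is trivial iff $\varphi^*\clo(2)\cong K_\Sigma$.

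For the balanced part of (b), assuming (a) I would exhibit the explicit metric obtained by pulling back the product metric on $Z$,
\[
\omega_X\;=\;\varphi^*\omega_{FS}+\alpha\omega_I+\beta\omega_J+\gamma\omega_K,
\]
where $(\alpha,\beta,\gamma):\Sigma\to S^2$ is the function supplied by~(\ref{stereographic}). The hyperk\"ahler algebraic relations $\omega_I\wedge\omega_J=\omega_J\wedge\omega_K=\omega_I\wedge\omega_K=0$ and $\omega_I^2=\omega_J^2=\omega_K^2$, together with $\alpha^2+\beta^2+\gamma^2=1$, give $(\alpha\omega_I+\beta\omega_J+\gamma\omega_K)^2=\omega_I^2$, a closed fiberwise volume form. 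Hence
\[
\omega_X^2\;=\;2\,\varphi^*\omega_{FS}\wedge(\alpha\omega_I+\beta\omega_J+\gamma\omega_K)+\omega_I^2,
\]
and $d(\omega_X^2)$ reduces to $2\,\varphi^*\omega_{FS}\wedge(d\alpha\wedge\omega_I+d\beta\wedge\omega_J+d\gamma\wedge\omega_K)$, which vanishes for dimension reasons, since it contains three differentials pulled back from the real surface $\Sigma$.

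For the non-K\"ahler part of (b), a hypothetical K\"ahler class $[\omega]\in H^2(X,\R)$ would restrict, via the inclusion $i_x:M\hookrightarrow X$ of each fiber, to a K\"ahler class $[i_x^*\omega]\in H^2(M,\R)$; because $\Sigma$ is connected, this restriction is independent of $x$. But it would have to be of type $(1,1)$ with respect to the fiber complex structure $\alpha(x)I+\beta(x)J+\gamma(x)K$ for \emph{every} $x\in\Sigma$. Decomposing the class into its self-dual and anti-self-dual components shows that a nonzero class whose self-dual part is $a\omega_I+b\omega_J+c\omega_K$ is of type $(1,1)$ for $(\alpha,\beta,\gamma)$ only when the latter is proportional to $(a,b,c)$; positivity of a K\"ahler class forces its self-dual part to be nonzero, leaving at most two (antipodal) admissible twistor parameters. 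Since $\varphi$ is nonconstant, the image of $(\alpha,\beta,\gamma)$ in $S^2$ is infinite, yielding a contradiction.

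The canonical-bundle computation is essentially formal from Theorem~\ref{twistor}(c), and the balanced identity is a direct verification using the hyperk\"ahler algebraic relations; the main obstacle is the non-K\"ahler assertion, which demands ruling out \emph{every} K\"ahler metric on $X$, not merely checking that the specific $\omega_X$ fails to be closed. Supplying this genuine topological/Hodge-theoretic obstruction via the moving twistor complex structure on the fibers is the conceptually delicate part of the proof.
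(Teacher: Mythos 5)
The paper itself does not prove this proposition --- it is imported from \cite{fei2015b, fei2016, fei2016b} --- so your argument can only be measured against the ingredients the paper deploys elsewhere. On that basis your reconstruction is essentially correct. Part (a) via base change of the relative cotangent bundle, relative adjunction, the observation that the section in Theorem \ref{twistor}(c) is nowhere vanishing (fiberwise nondegeneracy of a $2$-form on a complex surface), and $p_*\clo_X=\clo_\Sigma$ is clean and complete. Your non-K\"ahler argument --- restricting a putative K\"ahler class to the fibers and noting that its self-dual part would have to be proportional to the varying $(\alpha,\beta,\gamma)$, impossible since $\varphi$ is nonconstant --- is the classical twistor-space obstruction and is sound. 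It is worth noting that the paper reaches a strictly stronger conclusion by a different mechanism: its corollary on pluriclosed metrics uses the pointwise identity $i\pt\bpt\omega'=-\kappa\widehat{\omega}\wedge\omega'\geq 0$ (not identically zero) and integrates $\omega_1\wedge i\pt\bpt\omega'$ over $X$ to exclude any pluriclosed, hence any K\"ahler, metric. That route requires the computation that $\alpha,\beta,\gamma$ lie in $\ker(-\Delta+2\kappa)$ but then rules out a larger class of metrics in two lines; yours is more elementary and purely cohomological.

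One genuine, though easily repaired, slip: the form $\omega_X=\varphi^*\omega_{FS}+\alpha\omega_I+\beta\omega_J+\gamma\omega_K$ you exhibit is not a Hermitian metric, because $\varphi^*\omega_{FS}$ degenerates along the base direction at the ramification points of $\varphi$, and Riemann--Hurwitz forces such points to exist (the pullback condition gives $\deg\varphi=g-1$, whence $4g-4>0$ branch points). Replace $\varphi^*\omega_{FS}$ by any K\"ahler form on $\Sigma$ --- for instance the metric $\widehat{\omega}$ the paper uses, yielding its $\omega_0$ --- and your computation survives verbatim: $\omega_X^2=2\,\omega_\Sigma\wedge\omega'+\omega_I^2$ by the hyperk\"ahler relations, and $\ud(\omega_X^2)=2\,\omega_\Sigma\wedge\ud\omega'$ vanishes because it involves three differentials pulled back from the real surface $\Sigma$.
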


From now on, we shall call condition (\ref{spinor}) plus that $\varphi$ is not the constant map the ``pullback condition''. Assuming the pullback condition, it is clear that $S=\varphi^*\clo(1)$ is a square root of $K_\Sigma$, which is known as a theta characteristic in algebraic geometry, or a spin structure according to Atiyah \cite{atiyah1971}. Furthermore, the linear system associated to the line bundle $S$ is basepoint-free.

Conversely, if we start with a basepoint-free theta characteristic $S$ on $\Sigma$, we may choose $s_1,s_2\in H^0(\Sigma,S)$ such that $s_1$ and $s_2$ have no common zeroes, then $\zeta=s_1/s_2$ is a meromorphic function on $\Sigma$. Moreover, $\zeta$ defines a holomorphic map $\varphi:\Sigma\to\ccc\pp^1$ such that the pullback condition holds.

Therefore in this paper, we shall call a pair $(\Sigma,\varphi)$ such that the pullback condition holds a \emph{vanishing spinorial pair}. The generalized Calabi-Gray construction says that from any vanishing spinorial pair, one can construct a compact non-K\"ahler Calabi-Yau 3-fold $X$ with balanced metrics. In particular, it includes the classical construction of Calabi \cite{calabi1958} and Gray \cite{gray1969}, where $\Sigma$ is an immersed minimal surface in a flat 3-torus and $\varphi$ its Gauss map. We also say such an $X$ is a \emph{genus $g$ generalized Calabi-Gray manifold}, where $g$ is the genus of $\Sigma$.

It is trivial to remark that if $(\Sigma,\varphi)$ is a vanishing spinorial pair and $\tau:\widetilde{\Sigma}\to\Sigma$ an unramified covering map, then $(\widetilde{\Sigma},\varphi\circ\tau)$ is also a vanishing spinorial pair.

\subsection{Minimal Surfaces in Flat 3-Spaces}

In this section, we construct vanishing spinorial pairs by studying minimal surfaces in flat $T^3$. Let $T^3=\R^3/\Gamma$ be a 3-torus equipped with the standard flat metric. Let $x=(x_1,x_2,x_3)$ be the coordinate function on $\R^3$, which we also use as local flat coordinates on $T^3$. Now assume that $\Sigma\subset T^3$ is a closed (immersed) oriented surface of genus $g$ with induced metric and let $(u,v)$ be local isothermal coordinates on $\Sigma$. It is well-known that the (conformal class of) induced metric on $\Sigma$ determines a complex structure on $\Sigma$ and $z=u+iv$ is a local holomorphic coordinate. The metric on $\Sigma$ can be expressed as
\[\ud s^2=\rho(u,v)(\ud u^2+\ud v^2)=\rho(z,\bz)\ud z\ud\bz.\]
In other words,
\[\langle{\p x\over \p u},{\p x\over \p u}\rangle=\langle{\p x\over\p v},{\p x\over\p v}\rangle=\rho\textrm{\quad and \quad}\langle{\p x\over\p u},{\p x\over\p v}\rangle=0,\]
where $\langle\cdot,\cdot\rangle$ is the ambient metric on $T^3$. Moreover we can write down the K\"ahler form $\hat{\omega}$ as
\[\hat{\omega}=\rho(u,v)\ud u\wedge\ud v=\frac{i\rho(z,\bz)}{2}\ud z\wedge\ud\bz.\]
Locally, define
\[\phi=(\phi_1,\phi_2,\phi_3)={\p\over\p z}(x_1,x_2,x_3)=\frac{1}{2}({\p\over\p u}-i{\p\over\p v})(x_1,x_2,x_3).\]
The isothermal condition implies that
\[\phi_1^2+\phi_2^2+\phi_3^2=0.\]
It is a well-known result that $\Sigma$ is a minimal surface if and only if $\phi$ is holomorphic, or equivalently, $x=(x_1,x_2,x_3)$ is harmonic with respect to the Laplace-Beltrami operator
\[\Delta=\frac{1}{\rho}({\p^2\over\p u^2}+{\p^2\over\p v^2})=\frac{4}{\rho}{\p^2\over\p z\p\bz}\]
on $\Sigma$. Notice that for any smooth function $u$ on $\Sigma$, we have
\[2i\ddb u=\Delta u\cdot\hat{\omega}.\]

Now let us assume that $\Sigma$ is minimal and denote by $Q$ the Fermat quadric
\[Q=\{[\phi_1:\phi_2:\phi_3]\in\C\pp^2:\phi_1^2+\phi_2^2+\phi_3^2=0\}.\]
Therefore we get a map $\nu:\Sigma\to Q$ given by
\[z=u+iv\mapsto[\phi_1(z):\phi_2(z):\phi_3(z)],\]
which is a globally defined holomorphic map and it does not depend on the choice of local isothermal coordinates. This is known as the tangential Gauss map.

A simple genus calculation indicates that $Q$ is biholomorphic to $\C\pp^1$. Let $\clo(1)$ be the positive generator of the Picard group of $Q$ and let $H$ be the hyperplane line bundle on $\C\pp^2$. It is easy to see that
\[H|_Q\cong\clo(2).\]
Moreover each $\phi_j$ can be thought of as a section of $H$, which corresponds to a globally defined holomorphic 1-form $\mu_j=\phi_j\ud z$ on $\Sigma$. From this, we see that
\[\nu^*H\cong K_\Sigma,\]
in other words, $(\Sigma,\nu)$ is a vanishing spinorial pair.

In fact, an explicit identification $g:\C\pp^1\to Q$ can be constructed as follows
\[[z_1:z_2]\mapsto [\phi_1:\phi_2:\phi_3]=[2z_1z_2:z_2^2-z_1^2:-i(z_1^2+z_2^2)].\]
Write $\zeta=z_2/z_1$, then
\[\zeta=\frac{\phi_2+i\phi_3}{\phi_1}=-\frac{\phi_1}{\phi_2-i\phi_3}.\]
Through the stereographic projection (\ref{stereographic}), it follows that the composition $\varphi=g^{-1}\circ\nu:\Sigma\to \C\pp^1=S^2$ is exactly the classical Gauss map given by the unit normal vector field. Moreover, the Gauss-Bonnet Theorem tells us that $\varphi:\Sigma\to \C\pp^1$ is of degree $g-1$.

According to the work of Meeks \cite{meeks1990} and Traizet \cite{traizet2008}, for every $g\geq3$, there exist minimal surfaces of genus $g$ in $T^3$. Conversely, it is an easy exercise in algebraic geometry that the existence of vanishing spinorial pair implies that the genus is at least three.

A pair $(\Sigma,\varphi)$ with pullback condition is not so far from being minimal. It turns out that (\ref{spinor}) provides the Weierstrass data for the universal cover of $\Sigma$ to be minimally immersed into $\rr^3$ such that its Gauss map is given by $\varphi$ itself. In this sense, we may think of $(\Sigma,\varphi)$ with pullback condition satisfied as ``minimal surfaces in $T^3$'' without solving the period problem.

\subsection{Scalar Reduction of the Anomaly Equation}

Starting from a vanishing spinorial pair $(\Sigma,\varphi)$, we can construct a compact non-K\"ahler Calabi-Yau 3-fold $X$ as described in Subsection \ref{construction}. Now let us consider solving the Strominger system on $X$, following the procedure and calculations given in \cite{fei2017b, fei2016}.

We start with the conformally balanced equation (\ref{cb}). Identifying $\ccc\pp^1$ with $Q$, we obtain a holomorphic map
\[g\circ\varphi:\Sigma\to Q\]
such that
\[(g\circ\varphi)^*(H|_Q)\cong K_\Sigma.\]
By pulling back $\phi_1$, $\phi_2$ and $\phi_3$ as sections of $H|_Q$, we get holomorphic 1-forms $\mu_1$, $\mu_2$ and $\mu_3$ on $\Sigma$ such that
\[\mu_1^2+\mu_2^2+\mu_3^2=0\]
and that the linear system spanned by $\{\mu_j\}_{j=1}^3$ is basepoint-free. As is known in the literature relating to Theorem \ref{twistor}(c), one can check that
\[\Omega=\mu_1\wedge\omega_I+\mu_2\wedge\omega_J+\mu_3\wedge\omega_K\]
is a nowhere vanishing holomorphic (3,0)-form on $X$. Indeed, using a suitable local coordinate $z$ on $\Sigma$, we have the local expression
\[ \Omega = 2 \varphi \, \ud z \wedge \omega_I + (\varphi^2 -1) \, \ud z \wedge \omega_J - i(1+\varphi^2) \, \ud z \wedge \omega_K. \]
A similar expression arises using $\xi=z_1/z_2$ on $\C\pp^1$. A computation shows that $\Omega(\p_z, v, x + i J_0 x)=0$ for all $v,x \in TM$.

Next, the expression
\[\widehat{\omega}=i(\mu_1\wedge\bmu_1+\mu_2\wedge\bmu_2+\mu_3\wedge\bmu_3)\]
defines a K\"ahler metric on $\Sigma$. Using local coordinates $z$ on $\Sigma$ and $\zeta=z_2/z_1$ on $\C\pp^1$, we have
\[\widehat{\omega}= 2(1 + \varphi \overline{\varphi})^2 \, i \ud z \wedge \ud \bar{z}. \]
A direct calculation reveals that the Gauss curvature $\kappa$ of $\widehat{\omega}$ is given by
\[-\kappa\widehat{\omega}=i\pt\bpt\log\rho=\varphi^*\omega_{FS}.\]
Since $\varphi^*\omega_{FS} = \dfrac{\|\nabla\varphi\|^2}{2} \hat{\omega}$, we obtain that
\[\|\nabla\varphi\|^2=-2\kappa,\]
hence $\widehat{\omega}$ has non-positive Gauss curvature.

If we add in the hyperk\"ahler fiber metrics of the fibration $p:X\to\Sigma$, we get a natural Hermitian metric
\[\omega_0=\widehat{\omega}+\alpha\omega_I+\beta\omega_J+\gamma\omega_K\]
on $X$. By definition,
\[ \|\Omega\|_{\omega_0}^2 {\omega_0^3 \over 3!} = i \Omega \wedge \overline{\Omega}. \]
We know that $\|\Omega\|_{\omega_0}$ is a constant by its design since
\[\omega_0^3 = 6 \, \hat{\omega} \wedge {\rm vol}_M, \ \  i \Omega \wedge \bar{\Omega} = 2 \, \hat{\omega} \wedge {\rm vol}_M.  \]
Here ${\rm vol}_M$ is the volume form of $(M,g)$. Furthermore, the balanced condition
\[\ud(\omega_0^2)=0\]
holds as well, and therefore $\omega_0$ solves the conformally balanced equation (\ref{cb}).

Let $f:\Sigma\to\rr$ be any smooth function on $\Sigma$, and define the Hermitian metric
\begin{equation}\label{ansatz}
\omega_f=e^{2f}\widehat{\omega}+e^f(\alpha\omega_I+\beta\omega_J+\gamma\omega_K)
\end{equation}
on $X$. For convenience of notation, we write
\[\omega'=\alpha\omega_I+\beta\omega_J+\gamma\omega_K,\]
whose exterior derivative is given by
\begin{equation}\label{d-omegaprime}
\ud \omega' = \ud \alpha \wedge \omega_I + \ud \beta \wedge \omega_J + \ud \gamma \wedge \omega_K.
\end{equation}
An elementary calculation indicates that
\[\begin{split}& \| \Omega \|_{\omega_f} = e^{-2f} \| \Omega \|_{\omega_0},\\
& \| \Omega \|_{\omega_f} \omega_f^2= 2 (e^f-1)\| \Omega \|_{\omega_0} \hat{\omega} \wedge \omega'+ \| \Omega \|_{\omega_0} \omega_0^2 \end{split}\]
and hence $\omega_f=e^{2f}\widehat{\omega}+e^f\omega'$ solves the conformally balanced equation (\ref{cb}) for arbitrary $f$.

We now consider the anomaly cancellation equation (\ref{ac}). The idea is the following: first we compute the curvature term $\tr(R_f\wedge R_f)$ with respect to the ansatz metric $\omega_f$ and then fix a gauge bundle which solves the Hermitian Yang-Mills equation (\ref{hym}). We then choose $f$ to solve the anomaly cancellation equation. The calculation of the curvature term is essentially the same as we did in \cite{fei2017b}. The result is
\[\tr(R_f\wedge R_f)=i\pt\bpt\left(\frac{\|\nabla\varphi\|^2}{e^f}\omega'\right)+\tr(R'\wedge R'),\]
where $\|\nabla\varphi\|^2=-2\kappa$ is with respect to $\widehat{\omega}$ and $R'$ is the curvature form of the relative cotangent bundle $\Omega_{X/\Sigma}$ with respect to the metric induced from $\omega_0$. Therefore the $\tr(R'\wedge R')$-term can be cancelled by the curvature term from the gauge bundle, and the anomaly cancelation equation (\ref{ac}) reduces to
\begin{equation}
i\pt\bpt\left(\left(e^f+\frac{\alpha'\kappa}{2e^f}\right)\omega'\right)=0.
\end{equation}
This choice of gauge bundle makes sense since the relative cotangent bundle solves the Hermitian Yang-Mills equation (\ref{hym}) automatically for arbitrary $\omega_f$. Roughly speaking this is because the fiber metrics of $p:X\to\Sigma$ are hyperk\"ahler, and we refer to \cite{fei2017b} for more details.

\begin{rmk}~\\
In the case when $M$ is a flat $T^4$, the term $\tr(R'\wedge R')$ disappears \cite{fei2016}. In particular we may use any flat vector bundle as our gauge bundle.
\end{rmk}

Notice that $u=e^f+\dfrac{\alpha'\kappa}{2e^f}$ is a function depending only on $\Sigma$, and we want to solve
\[i\ddb(u\omega')=i\ddb u\wedge\omega'+i\p u\wedge\dbar\omega'-i\dbar u\wedge\p\omega'+u\cdot i\ddb\omega'=0.\]
Direct calculation shows that
\[\begin{split}&\p\omega'=\dbar\alpha\wedge\omega_I+\dbar\beta\wedge\omega_J+\dbar\gamma\wedge\omega_K,\\ &\dbar\omega'=\p\alpha\wedge\omega_I+\p\beta\wedge\omega_J+\p\gamma\wedge\omega_K,\\ &i\ddb\omega'=-i\ddb\alpha\wedge\omega_I-i\ddb\beta\wedge\omega_J-i\ddb\gamma\wedge\omega_K.\end{split}\]
The decomposition of $\ud \omega'$ (\ref{d-omegaprime}) into its $(2,1)$ and $(1,2)$ parts can be seen by acting with the complex structure $J_0$ and using the following identities
\[\begin{split}& J_0 \omega_I = (2 \alpha^2 - 1) \omega_I + 2 \alpha \beta \omega_J + 2 \alpha \gamma \omega_K, \\ & J_0 \omega_J = (2 \beta^2 -1) \omega_J + 2 \beta \alpha \omega_I + 2 \beta \gamma \omega_K,\\ & J_0 \omega_K = (2 \gamma^2-1) \omega_K + 2 \gamma \alpha \omega_I + 2 \gamma \beta \omega_J, \\ & 0 = \alpha \bar{\p} \alpha + \beta \bar{\p} \beta + \gamma  \bar{\p} \gamma.\end{split}\]
Next, a computation shows that $\alpha$, $\beta$ and $\gamma$ all satisfy the equation
\[i\ddb v-\kappa v\hat{\omega}=0,\]
i.e., they live in the kernel of the operator $-\Delta+2\kappa$. Therefore
\begin{equation}\label{positive}
i\pt\bpt\omega'=-\kappa\widehat{\omega}\wedge\omega'
\end{equation}
and the anomaly cancellation equation (\ref{ac}) is further reduced to
\begin{equation}
-\Delta u+2\kappa u=0.
\end{equation}

In conclusion, we have demonstrated that to solve the full Strominger system on generalized Calabi-Gray manifolds, we may use the ansatz (\ref{ansatz}) and the whole system reduces to a quadratic algebraic equation and a linear PDE on $\Sigma$:
\begin{equation}\label{rac}\begin{cases}
&e^f+\dfrac{\alpha'\kappa}{2e^f}=u,\\
&\Delta u-2\kappa u=0.
\end{cases}\end{equation}

In particular, we get a smooth solution if and only if we can find a function $u$ in the kernel of the operator $-\Delta+2\kappa$ such that $u$ is positive at all ramification points of $\varphi$. Consequently the solvability of the Strominger system on $X$ is closely related to spectral properties of the operator $-\Delta+2\kappa=-\Delta-\|\nabla\varphi\|^2$ on $\Sigma$, which we shall explore in the next subsection. It is also clear that there are no solutions to the Strominger system using our ansatz if $\alpha'\leq 0$.

A byproduct of the above calculation is the following statement:
\begin{cor}~\\
A generalized Calabi-Gray manifold $X$ does not admit a pluriclosed metric.
\end{cor}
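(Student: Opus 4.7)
The plan is to argue by contradiction via the identity (\ref{positive}), $i\pt\bpt\omega' = -\kappa\,\widehat{\omega}\wedge\omega'$, which we already established en route to reducing the anomaly cancellation equation. Suppose, for contradiction, that $\omega$ is a pluriclosed Hermitian metric on $X$, so $\pt\bpt\omega = 0$. The strategy is to evaluate the integral $\int_X i\pt\bpt\omega' \wedge \omega$ in two complementary ways and compare.

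First, I would integrate by parts twice to transfer the operator $i\pt\bpt$ from $\omega'$ onto $\omega$. Applying Stokes' theorem to $\ud(\bpt\omega' \wedge \omega)$ yields $\int_X \pt\bpt\omega' \wedge \omega = \int_X \bpt\omega' \wedge \pt\omega$ (only the $(2,1)$-component of $\ud\omega$ survives on bidegree grounds on a 3-fold), and similarly $\ud(\omega' \wedge \pt\omega)$ gives $\int_X \omega' \wedge \pt\bpt\omega = \int_X \bpt\omega' \wedge \pt\omega$. Combining these,
\[
\int_X i\pt\bpt\omega' \wedge \omega \;=\; \int_X \omega' \wedge i\pt\bpt\omega \;=\; 0,
\]
where the final equality uses the pluriclosed hypothesis.

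Second, substituting (\ref{positive}) rewrites the same integral as
\[
\int_X i\pt\bpt\omega' \wedge \omega \;=\; \int_X (-\kappa)\,\widehat{\omega}\wedge\omega'\wedge\omega,
\]
and I would show this is strictly positive. The function $-\kappa = \tfrac{1}{2}\|\nabla\varphi\|^2$ is non-negative on $\Sigma$ and, since $\varphi$ is non-constant, strictly positive off its discrete ramification locus. The only nontrivial step — and the main obstacle of the argument — is pointwise strict positivity of the $(3,3)$-form $\widehat{\omega}\wedge\omega'\wedge\omega$ for any positive-definite Hermitian $(1,1)$-form $\omega$. This reduces to a short local calculation in coordinates adapted to the fibration $p:X\to\Sigma$: the pullback $\widehat{\omega}$ occupies the horizontal direction while $\omega'$ restricts fiberwise to the positive hyperk\"ahler form $\alpha\omega_I + \beta\omega_J + \gamma\omega_K$, so $\widehat{\omega}\wedge\omega'$ is a strongly positive $(2,2)$-form whose wedge against any positive-definite $(1,1)$-form is a strictly positive volume form. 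Comparing the two evaluations yields $0 > 0$, the desired contradiction.
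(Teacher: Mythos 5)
Your argument is correct and is essentially the paper's own proof: both evaluate $\int_X \omega\wedge i\pt\bpt\omega'$, showing it vanishes by integration by parts against the pluriclosed metric while identity (\ref{positive}) together with $-\kappa=\tfrac{1}{2}\|\nabla\varphi\|^2\geq 0$ (vanishing only on the ramification locus) forces it to be strictly positive. You merely spell out the two steps the paper leaves implicit, namely the double application of Stokes' theorem and the pointwise positivity of $\widehat{\omega}\wedge\omega'\wedge\omega$, both of which check out.
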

\begin{proof}
Suppose $\omega_1$ is a pluriclosed metric on $X$, that is, $\omega_1$ is a positive (1,1)-form such that $i\pt\bpt\omega_1=0$. Let us consider the integral
\[\int_X\omega_1\wedge i\pt\bpt\omega'.\]
On one hand, by integration by part, this integral vanishes. On the other hand, $i\pt\bpt\omega'=-\kappa\widehat{\omega}\wedge\omega'$ is positive away from a set of measure zero, hence we get a contradiction.
\end{proof}

This corollary generalizes a result in \cite{fei2016}.

\subsection{Solutions}\label{solutions}

In the previous section we manifested that the solvability of the Strominger system on generalized Calabi-Gray manifolds is closely related to the spectral property of the operator $-\Delta+2\kappa=-\Delta-\|\nabla\varphi\|^2$. In fact, this operator falls into the larger class of Sch\"odinger operators associated with holomorphic maps from Riemann surfaces to complex manifolds.

Let $\varphi:\Sigma\to N$ be a holomorphic map from a compact Riemann surfaces to a complex manifold. By fixing a Hermitian metric on $N$ and choosing a metric $\widehat{\omega}$ on $\Sigma$, we may consider the Sch\"odinger operator
\[L_\varphi=-\Delta-\|\nabla\varphi\|^2\]
on $\Sigma$. Here, $\Delta$ is the Laplace-Beltrami operator associated to $\widehat{\omega}$, and the norm $\|\nabla\varphi\|$ is measured with respect to the chosen metrics on both domain and target. Clearly, $L_\varphi$ is a self-adjoint operator on the Sobolev space $W_1(\Sigma)$ with respect to the $L^2$-norm. The associated quadratic form is
\[Q_\varphi(u,v)=\int_\Sigma\left(\nabla u\cdot\nabla v-\|\nabla\varphi\|^2uv\right)\widehat{\omega}.\]
Since the domain has real dimension two, this quadratic form is conformally invariant. Therefore, the kernel of $L_\varphi$ and the number $\ind~L_\varphi$ of negative eigenvalues of $L_\varphi$, known as the index, do not depend on the choice of the metric $\widehat{\omega}$.

For example, if $\varphi$ is the (extended) Gauss map of a minimal surface $\Sigma$ in flat 3-space, then $L_\varphi=-\Delta+2\kappa$ is the Jacobi operator, or the stability operator of the minimal surface, which comes from the second variation of the area functional. As mentioned in \cite{montiel1991}, this operator also shows up naturally in the study of Willmore surfaces and Polyakov quantum string theory. Throughout this paper, the target $N$ is always the projective line $\ccc\pp^1$ equipped with the Fubini-Study metric (\ref{fs}).

Now let $\varphi:\Sigma\to\ccc\pp^1$ be a nonconstant holomorphic map. We would like to understand the space $\ker L_\varphi$. By the stereographic projection (\ref{stereographic}), we may think of $\varphi=(\alpha,\beta,\gamma)$ as a map into $\rr^3$. It is easy to check that $\alpha$, $\beta$ and $\gamma$ live in the kernel of $L_\varphi$, therefore
\[\dim\ker L_\varphi\geq 3.\]
Let $V_\varphi$ denote the 3-dimensional space spanned by $\alpha$, $\beta$ and $\gamma$. Montiel and Ros \cite{montiel1991} showed that the space $\ker L_\varphi/V_\varphi$ is in 1-1 correspondence with complete branched minimal immersions into $\rr^3$ with finite total curvature and planar ends (up to translation) such that its extended Gauss map is $\varphi$. However, it seems that in general we do not know how to compute $\ker L_\varphi$ and there are examples where $\dim L_\varphi$ is strictly greater than 3. For this reason, to solve the reduced the Strominger system (\ref{rac}) on generalized Calabi-Gray manifolds, let us consider only the case $u\in V_\varphi$. In this scenario, the condition that $u$ is positive at all ramification points of $\varphi$ is equivalent to all branched points of $\varphi$ on $\ccc\pp^1$ lying in an open hemisphere, which we abbreviate as the ``hemisphere condition''. Clearly if $(\Sigma,\varphi)$ satisfies the hemisphere condition and $\tau:\widetilde{\Sigma}\to\Sigma$ is an unramified covering of Riemann surfaces, then $(\widetilde{\Sigma},\varphi\circ\tau)$ also satisfies the hemisphere condition.

Given any vanishing spinorial pair $(\Sigma,\varphi)$, we may make the hemisphere condition holds by composing $\varphi$ with a suitable automorphism of $\ccc\pp^1$, since there exist M\"obius transformations on $\ccc\pp^1$ pushing all points away south pole to the north hemisphere. As a consequence, there exist vanishing spinorial pairs satisfying the hemisphere condition for every genus $g\geq3$ and the moduli of curves with vanishing spinorial pairs satisfying the hemisphere condition is exactly the moduli of curves with basepoint-free theta characteristics, which roughly speaking forms a Zariski open set of the theta-null divisor in the moduli space of curves \cite{teixidor1987}. However, it seems unknown whether there exists a minimal surface in $T^3$ with its Gauss map satisfying the hemisphere condition. The hemisphere condition fails for the 5-dimensional family of triply periodic minimal surfaces constructed by Meeks \cite{meeks1990}.

Summarizing our previous results, we have proved that

\begin{thm}\label{main}~\\
Let $(\Sigma,\varphi)$ be a vanishing spinorial pair with the hemisphere condition satisfied. Then we may construct explicit solutions to the Strominger system on the associated generalized Calabi-Gray manifold $X$. In fact, we get a family of such solutions of real dimension $\dim\ker L_\varphi\geq3$. As a consequence, for every genus $g\geq3$, there exist smooth solutions to the Strominger system on genus $g$ generalized Calabi-Gray manifolds. They have infinitely many distinct topological types and sets of Hodge numbers.
\end{thm}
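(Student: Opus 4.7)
The plan is to assemble the reductions carried out in the previous subsections. Given a vanishing spinorial pair $(\Sigma,\varphi)$ satisfying the hemisphere condition, the Strominger system has been reduced via (\ref{rac}) to two tasks: producing an admissible $u \in \ker L_\varphi$ and solving the resulting quadratic equation for $e^f$.

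For the first task, the hemisphere condition furnishes a unit vector $(a,b,c) \in \R^3$ whose inner product with $(\alpha,\beta,\gamma) = \varphi$ is strictly positive at every ramification point of $\varphi$. Setting $u = a\alpha + b\beta + c\gamma \in V_\varphi \subset \ker L_\varphi$ automatically solves the linear PDE $\Delta u - 2\kappa u = 0$, and is positive precisely where required. For the second task, I would solve the pointwise quadratic $e^{2f} - u e^f + \tfrac{\alpha'\kappa}{2} = 0$. Since $\kappa \leq 0$ (from $-\kappa\widehat{\omega} = \varphi^*\omega_{FS}$) and $\alpha' > 0$, the discriminant $u^2 - 2\alpha'\kappa$ is nonnegative everywhere, and vanishes only when simultaneously $u = 0$ and $\kappa = 0$, a case excluded by our choice of $u$. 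Taking the larger root
\[ e^f = \frac{u + \sqrt{u^2 - 2\alpha'\kappa}}{2}, \]
produces a smooth positive function on $\Sigma$: at ramification points ($\kappa = 0$) it reduces to $e^f = u > 0$, while at all other points the radicand is strictly positive and the whole expression is manifestly smooth and positive. Substituting into the ansatz (\ref{ansatz}) yields an explicit smooth solution of the Strominger system. Varying $(a,b,c)$, and more generally $u$ within the open subset of $\ker L_\varphi$ consisting of functions positive at every ramification point, produces a family of solutions of real dimension $\dim\ker L_\varphi \geq 3$.

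To exhibit solutions for every $g \geq 3$, I would invoke the existence of closed minimal surfaces of genus $g$ in $T^3$ due to Meeks \cite{meeks1990} and Traizet \cite{traizet2008}, whose Gauss maps give vanishing spinorial pairs $(\Sigma_g, \varphi_g)$ by Subsection 2.2. If the hemisphere condition fails for $\varphi_g$, I would compose with a M\"obius automorphism of $\ccc\pp^1$ dragging the finitely many images of ramification points into an open hemisphere; as already noted in the text, this preserves the pullback condition and now also the hemisphere condition.

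The remaining claim on infinitely many topological types and Hodge numbers is the most delicate point, and I expect it to be the main obstacle. Topologically $X_g \cong \Sigma_g \times M$, so $b_1(X_g) = 2g + b_1(M)$ grows without bound, giving infinitely many underlying diffeomorphism types. For Hodge numbers, pulling holomorphic $1$-forms back along the holomorphic fibration $p: X_g \to \Sigma_g$ yields an injection $H^0(\Sigma_g, K_{\Sigma_g}) \hookrightarrow H^0(X_g, \Omega^1_{X_g})$, so $h^{1,0}(X_g) \geq g$. Since $h^{1,0}$ then takes infinitely many values as $g$ varies, we obtain infinitely many distinct Hodge diamonds. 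The twisted complex structure on $X_g$ could in principle contribute additional holomorphic $1$-forms coming from $M$, but the monotone lower bound $h^{1,0}(X_g) \geq g$ suffices for the conclusion.
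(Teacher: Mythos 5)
Your proposal is correct and follows essentially the same route as the paper, which presents this theorem as a summary of the reductions in Subsections 2.1--2.4: choose $u\in V_\varphi\subset\ker L_\varphi$ positive at the ramification points via the hemisphere condition, solve the pointwise quadratic for $e^f$, and obtain genus $g\geq 3$ examples from minimal surfaces (or basepoint-free theta characteristics) after composing with a M\"obius transformation. You merely make explicit some details the paper leaves implicit, such as the closed-form larger root of the quadratic with its positivity/smoothness check and the $b_1(X_g)=2g+b_1(M)$, $h^{1,0}(X_g)\geq g$ argument for infinitely many topological types and Hodge numbers, all of which are sound.
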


The genus $g=3$ case is of particular interest. Assuming that $\Sigma$ is of genus 3 and $(\Sigma,\varphi)$ is a vanishing spinorial pair, then clearly $\varphi:\Sigma\to\ccc\pp^1$ is of degree 2, therefore $\Sigma$ is hyperelliptic. In this case, the map $\varphi:\Sigma\to\ccc\pp^1$ is a double covering branched over 8 points on $\ccc\pp^1$. Conversely, from any 8 distinct points on $\ccc\pp^1$, one can construct a double covering $\varphi:\Sigma\to\ccc\pp^1$ branched over them. By Riemann-Hurwitz, such $\Sigma$'s are hyperelliptic genus $3$ curves. In addition, from the double covering construction, the pullback condition $\varphi^*\clo(2)=K_\Sigma$ is automatically satisfied. Therefore we can construct vanishing spinorial pairs satisfying the hemisphere condition on any hyperelliptic genus 3 curve, hence solving the Strominger system.

It is also worth pointing out that Theorem \ref{main} can be strengthened to an ``if and only if'' statement for the genus 3 case. This is because when the genus is 3 and $\varphi:\Sigma\to\ccc\pp^1$ is the hyperelliptic covering, we may use the hyperelliptic involution $\iota$ on $\Sigma$ commuting with $\varphi$. Since $\iota$ is an isometry of $\widehat{\omega}$, it also acts on the space $\ker L_\varphi$. Consequently $\ker L_\varphi$ breaks up into eigenspaces of $\iota$ with eigenvalue 1 and -1:
\[\ker L_\varphi=V_1\oplus V_{-1}.\]
The eigenspace $V_1$ of eigenvalue 1 consists of functions pulled back from $\ccc\pp^1$, hence we have
\[V_1=V_\varphi,\]
since functions in $V_1$ have to be the first eigenfunctions of the spherical Laplacian on $\ccc\pp^1$. On the other hand $\iota$ acts as -1 on $V_{-1}$, therefore any function in $V_{-1}$ must vanish on the fixed points of $\iota$, i.e., the ramification points of $\varphi$, hence we cannot use functions in $V_{-1}$ to produce smooth solutions of the Strominger system. So we have proved
\begin{cor}~\\
Suppose $\Sigma$ has genus 3 and let $(\Sigma,\varphi)$ be a vanishing spinorial pair. Using our ansatz, the Strominger system has a smooth solution if and only if $(\Sigma,\varphi)$ satisfies the hemisphere condition.
\end{cor}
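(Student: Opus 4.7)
The plan is to establish only the nontrivial direction, namely that a smooth solution produced by the ansatz (\ref{ansatz}) forces the hemisphere condition; the reverse implication is already contained in Theorem \ref{main}. By the reduced system (\ref{rac}), such a solution corresponds to some $u \in \ker L_\varphi$ that is strictly positive at every ramification point of $\varphi$, so the goal is to show that the $V_\varphi$-component of any such $u$ must be positive there.

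The first step is to exploit the hyperelliptic involution $\iota$ of $\Sigma$, whose fixed points are precisely the eight ramification points of $\varphi$ and which satisfies $\varphi \circ \iota = \varphi$. Since both $\widehat{\omega}$ and $\|\nabla\varphi\|^2$ are determined by $\varphi$, they are $\iota$-invariant; hence $\iota^*$ commutes with $L_\varphi$, and $\ker L_\varphi$ splits into the $\pm 1$-eigenspaces $V_1 \oplus V_{-1}$.

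The key technical step is the identification $V_1 = V_\varphi$. I would fix $u \in V_1$, write $u = \varphi^* \tilde u$ for some $\tilde u \in C^\infty(\ccc\pp^1)$, and translate $L_\varphi u = 0$ into an equation on $\ccc\pp^1$. Using $2i\pt\bpt u = \Delta_{\widehat{\omega}} u \cdot \widehat{\omega}$, its analogue on $\ccc\pp^1$, and the conformal identity $\varphi^*\omega_{FS} = -\kappa\,\widehat{\omega} = \tfrac{1}{2}\|\nabla\varphi\|^2\,\widehat{\omega}$, the equation $L_\varphi u=0$ should reduce, away from the branch locus, to $\Delta_{FS}\tilde u + 2\tilde u = 0$, and therefore hold everywhere on $\ccc\pp^1$ by continuity. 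The $(-2)$-eigenspace of the round sphere is three-dimensional and spanned by the coordinate functions $\alpha, \beta, \gamma$, which is exactly $V_\varphi$.

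Once $V_1 = V_\varphi$ is in place the rest is routine: any $u \in V_{-1}$ satisfies $\iota^* u = -u$ and hence vanishes on the fixed point set of $\iota$, i.e.\ at every ramification point. Decomposing $u = u_1 + u_{-1}$, positivity of $u$ at the ramification points forces $u_1 \in V_\varphi$ to be positive there, which is precisely the hemisphere condition recalled just before the corollary. The only step I expect to require care is the conformal descent argument that identifies $V_1$; everything afterward is linear algebra on a three-dimensional space.
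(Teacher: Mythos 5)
Your proposal is correct and follows essentially the same route as the paper: decompose $\ker L_\varphi = V_1 \oplus V_{-1}$ under the hyperelliptic involution, identify $V_1 = V_\varphi$ by descending the equation to $\ccc\pp^1$ (where the paper simply invokes that invariant kernel elements must be first eigenfunctions of the spherical Laplacian), and note that elements of $V_{-1}$ vanish at the fixed points of $\iota$, i.e.\ at the ramification points. Your write-up merely supplies more detail on the conformal descent step than the paper does.
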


\subsection{Geometric Consequences of the Hemisphere Condition}

Though the hemisphere condition is trivial for algebraic geometry, it has nontrivial consequences in differential geometry. In this subsection, we prove a simple result regarding the index of the operator $-\Delta-\|\nabla\varphi\|^2$ mentioned in Subsection \ref{solutions}. We refer to \cite{grigoryan2004, fei2018} for related estimates of eigenvalues.

Let $\varphi:\Sigma\to\ccc\pp^1$ be a holomorphic map from a compact Riemann surface to the projective line. Fix the Fubini-Study metric on $\ccc\pp^1$. As we have seen, the index $\ind~L_\varphi$, i.e., the number of negative eigenvalues of the operator $-\Delta-\|\nabla\varphi\|^2$, does not depend on the choice of the metric on $\Sigma$. Moreover, we have the following estimates
\begin{thm}(Tysk \cite{tysk1987})
\[\ind~L_\varphi\leq 7.68183\cdot\deg\varphi.\]
\end{thm}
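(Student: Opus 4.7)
My strategy is to bound the number of negative eigenvalues of the Schr\"odinger operator $L_\varphi = -\Delta - V$, where $V = \|\nabla\varphi\|^2 \geq 0$, by combining the conformal invariance of the index (noted just above) with a Cwikel--Lieb--Rozenblum (CLR) type estimate adapted to this setting.

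First I would compute the total mass of the potential. Using the identity $\varphi^*\omega_{FS} = \tfrac{1}{2}\|\nabla\varphi\|^2 \hat\omega$ recorded earlier, together with $\int_{\ccc\pp^1}\omega_{FS} = 4\pi$ (a direct integration of (\ref{fs}) in polar coordinates $\zeta = re^{i\theta}$), one has
\[
\int_\Sigma V\,\hat\omega \;=\; 2\int_\Sigma \varphi^*\omega_{FS} \;=\; 8\pi\,\deg\varphi.
\]
Hence the target estimate $\ind~L_\varphi \leq 7.68183\cdot\deg\varphi$ is equivalent to a universal inequality of the form $\ind~L_\varphi \leq C\int_\Sigma V\,\hat\omega$ to be established with $C = 7.68183/(8\pi)$.

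Next I would run a Courant-type disjoint-support argument: $\ind~L_\varphi \geq k$ iff one can find disjoint open sets $D_1,\dots,D_k \subset \Sigma$ on each of which the first Dirichlet eigenvalue of $L_\varphi$ is negative, i.e.\ on each of which there is a $\phi_i \in C_c^\infty(D_i)$ with $\int_{D_i}|\nabla \phi_i|^2 < \int_{D_i} V \phi_i^2$. A local Sobolev or Moser--Trudinger estimate on $D_i$ should convert this strict inequality into a uniform lower bound for $\int_{D_i} V\,\hat\omega$; summing over the disjoint $D_i$ and comparing with the total mass computed above would then yield a bound linear in $\deg\varphi$.

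The main obstacle lies in the critical dimension $2$, where the naive $L^1$ CLR bound fails for arbitrary surface potentials without a logarithmic correction. To bypass this I would exploit the special structure $V = \|\nabla\varphi\|^2$ via conformal invariance: replace $\hat\omega$ by the (singular) conformal metric with K\"ahler form $\varphi^*\omega_{FS}$, which agrees with $\hat\omega$ up to conformal factor away from the ramification divisor and in which the potential becomes the constant $2$. The problem thereby reduces to estimating the index of a Schr\"odinger operator of the form $-\Delta - 2$ on a branched $(\deg\varphi)$-sheeted cover of $\ccc\pp^1$, to be compared with the model operator $-\Delta - 2$ on $(\ccc\pp^1,\omega_{FS})$, whose spectrum $\{\ell(\ell+1)-2\}_{\ell\geq 0}$ has exactly one negative eigenvalue. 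A packing argument on $\ccc\pp^1$ balancing the number of sheets against the contribution of the ramification divisor should produce the sharp linear dependence on $\deg\varphi$, and careful numerical optimization of this covering is where the specific constant $7.68183$ must emerge.
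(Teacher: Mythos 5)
This statement is not proved in the paper at all: it is Tysk's theorem, quoted from \cite{tysk1987} and used as a black box, so there is no internal argument to compare yours against. Judged on its own terms, your proposal gets the correct first reduction --- conformal invariance of the index lets you replace $\hat\omega$ by the singular metric $\varphi^*\omega_{FS}$, turning the problem into counting eigenvalues of $-\Delta_\varphi$ below $2$ on a degree-$d$ branched conformal cover of $(\ccc\pp^1,\omega_{FS})$ (this is exactly the device the paper itself uses for the \emph{lower} bound in its Proposition, and it is also how Tysk begins). Your computation $\int_\Sigma \|\nabla\varphi\|^2\,\hat\omega = 8\pi\deg\varphi$ is correct. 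But the proof has a genuine gap where the actual counting happens: the phrase ``a packing argument on $\ccc\pp^1$ balancing the number of sheets against the contribution of the ramification divisor should produce the sharp linear dependence'' names no mechanism. The ingredient that actually does this work in Tysk's proof is a heat-trace comparison for branched conformal covers: the heat kernel of $(\Sigma,\varphi^*\omega_{FS})$ restricted to the diagonal is bounded by $\deg\varphi$ times an explicit bound for the heat kernel of $S^2$, whence $N(2)\leq e^{2t}\,\mathrm{Tr}\,e^{t\Delta_\varphi}\leq \deg\varphi\cdot e^{2t}\cdot C(t)$, and minimizing over $t>0$ produces the constant $7.68183$. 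Without some such trace or kernel estimate, no amount of ``careful numerical optimization'' will conjure that number.

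The first half of your plan is also not salvageable as written. The disjoint-support characterization of the index is fine, but the hoped-for uniform lower bound on $\int_{D_i}V\,\hat\omega$ over every domain $D_i$ carrying a negative Dirichlet eigenvalue is precisely what fails in dimension two: on a planar domain an arbitrarily small amount of attractive potential mass can produce a bound state if it is suitably concentrated, which is the same failure of the $L^1$ CLR bound you yourself flag. You correctly pivot to the conformal structure to escape this, but then the argument stops one step short of a proof. I would recommend either citing Tysk as the paper does, or, if you want a self-contained argument, writing out the heat-kernel comparison for branched covers of $S^2$ (Tysk's Lemma 1) and the $t$-optimization explicitly.
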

\begin{thm}(Grigor'yan-Netrusov-Yau \cite{grigoryan2004})
\[\ind~L_\varphi\geq C\cdot\deg\varphi\]
for some absolute constant $C$.
\end{thm}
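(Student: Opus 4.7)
The plan is to use the variational characterization of the index: $\ind L_\varphi$ equals the maximal dimension of a subspace $V\subset W_1(\Sigma)$ on which the quadratic form
\[ Q_\varphi(u)=\int_\Sigma\|\nabla u\|^2\,\hat\omega-\int_\Sigma\|\nabla\varphi\|^2\,u^2\,\hat\omega \]
is negative definite, so it suffices to exhibit, for $d:=\deg\varphi$, an explicit subspace of dimension $N\geq Cd$ on which $Q_\varphi<0$. Exploiting the identity $\|\nabla\varphi\|^2\,\hat\omega=2\,\varphi^*\omega_{FS}$ already derived in Section~2.3, the potential measure $\mu:=\|\nabla\varphi\|^2\,\hat\omega$ on $\Sigma$ has total mass $2d\int_{\ccc\pp^1}\omega_{FS}$, scaling linearly in $d$; the whole task is to distill this bulk of mass into $\sim d$ disjoint negative directions for $Q_\varphi$.

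The first step would be the measure-decomposition argument at the heart of Grigor'yan-Netrusov-Yau. Given total mass $M\sim d$ and a target count $N=\lfloor c_1 d\rfloor$, I would apply their covering/decomposition lemma to produce $N$ pairwise disjoint ``capacitor pairs'' $(A_i,\widetilde A_i)$ with $A_i\Subset\widetilde A_i\subset\Sigma$ such that each core satisfies $\mu(A_i)\geq c_2$ and each annular buffer $\widetilde A_i\setminus A_i$ admits a Lipschitz cutoff $\eta_i$ with $\eta_i\equiv 1$ on $A_i$, $\eta_i\equiv 0$ outside $\widetilde A_i$, and capacity bound $\int_\Sigma\|\nabla\eta_i\|^2\,\hat\omega\leq c_3$. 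Here $c_1,c_2,c_3$ are absolute constants chosen so that $c_3<c_2$, and the buffers $\widetilde A_i$ are pairwise disjoint.

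Setting $u_i:=\eta_i$, disjointness of the $\widetilde A_i$ makes the supports and gradient supports disjoint, so all cross terms in both $\int\|\nabla u\|^2\,\hat\omega$ and $\int u^2\,d\mu$ vanish on the span. For each $i$ one then obtains
\[ Q_\varphi(u_i)=\int_\Sigma\|\nabla\eta_i\|^2\,\hat\omega-\int_\Sigma\eta_i^2\,d\mu\leq c_3-\mu(A_i)\leq c_3-c_2<0, \]
using $\eta_i\equiv 1$ on $A_i$. Hence $Q_\varphi$ is strictly negative definite on the $N$-dimensional space $\mathrm{span}\{u_1,\ldots,u_N\}$, giving $\ind L_\varphi\geq N\geq c_1 d$, that is, the stated bound with $C=c_1$.

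The hard part, unsurprisingly, is the covering lemma. In real dimension two the Dirichlet capacity of an annulus depends only logarithmically on its modulus, so one cannot trivially make the buffers thin and still keep their capacity small; they must have genuine conformal thickness. The technical achievement of \cite{grigoryan2004} is a decomposition theorem for Radon measures on rather general length spaces that simultaneously allocates a uniform fraction of the total mass to $\sim M$ disjoint cores while guaranteeing buffers of uniformly bounded capacity. Once that lemma is established, the Rayleigh-quotient computation above is routine, and the holomorphy of $\varphi$ enters only to ensure that $\mu$ is a well-behaved measure with mild cone-type densities at the finitely many branch points, which comfortably falls within their hypotheses.
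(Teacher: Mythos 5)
This theorem is not proved in the paper at all: it is imported verbatim from \cite{grigoryan2004} as a known result, so there is no ``paper's own proof'' to compare against. Judged on its own terms, your write-up correctly reproduces the architecture of the Grigor'yan--Netrusov--Yau argument: the variational characterization of the index, the observation that the potential measure $\mu=\|\nabla\varphi\|^2\,\hat\omega=2\varphi^*\omega_{FS}$ has total mass $8\pi\deg\varphi$ growing linearly in the degree, the reduction to finding $\sim\deg\varphi$ disjoint capacitors $(A_i,\widetilde A_i)$ with $\mu(A_i)\geq c_2$ and cutoffs of energy at most $c_3<c_2$, and the diagonal Rayleigh-quotient computation showing $Q_\varphi<0$ on the span of the cutoffs. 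That final computation is correct and complete as you state it (the $u_i$ have pairwise disjoint supports, hence are independent and the form is diagonal on their span).

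The genuine gap is the one you yourself flag: everything rests on the measure-decomposition lemma producing the capacitors, and you invoke it rather than prove it. That lemma is not a routine covering argument --- precisely because, as you note, in two real dimensions the capacity of an annulus decays only logarithmically in its conformal modulus, one cannot shrink the buffers to gain disjointness without losing the capacity bound; reconciling ``many disjoint cores each carrying a fixed amount of mass'' with ``buffers of uniformly bounded capacity'' is the entire content of \cite{grigoryan2004}. So your proposal is an accurate roadmap of the cited proof, not an independent proof; to make it self-contained you would need to either prove the decomposition theorem for Radon measures on length spaces or at least verify its hypotheses and state it precisely. One small correction along the way: $\mu$ is simply a smooth measure on $\Sigma$ (a smooth nonnegative $2$-form vanishing at the ramification points), so no discussion of cone-type densities is needed; the singularities of $\varphi^*\omega_{FS}$ are singularities of the \emph{metric}, not of the measure.
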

Assuming the hemisphere condition, we can derive
\begin{prop}\label{index}~\\
If the branched points of $\varphi$ in $\ccc\pp^1$ all lie in an open hemisphere, then we have the estimate
\[\ind~L_\varphi\geq\deg\varphi.\]
\end{prop}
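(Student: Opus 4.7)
The plan is to produce $d = \deg \varphi$ linearly independent smooth test functions on $\Sigma$, supported on pairwise disjoint ``sheets'' over a suitable open set in $\ccc\pp^1$, on which the quadratic form $Q_\varphi$ is strictly negative. The starting point is the following pullback identity: if $W \subset \ccc\pp^1$ is an open, simply connected set disjoint from the branch locus of $\varphi$, then the preimage $\varphi^{-1}(W) = V_1 \sqcup \cdots \sqcup V_d$ decomposes into $d$ connected components with each $\varphi|_{V_i}: V_i \to W$ a biholomorphism. For any $h \in C_c^\infty(W)$, define $\widetilde{h}_i \in C^\infty(\Sigma)$ to equal $h \circ \varphi$ on $V_i$ and zero elsewhere. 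Using conformal invariance of the two-dimensional Dirichlet integral, the change-of-variables formula on each $V_i$, and the identity $\|\nabla\varphi\|^2 \widehat{\omega} = 2\,\varphi^*\omega_{FS}$, one checks
\[
Q_\varphi(\widetilde{h}_i,\widetilde{h}_i) = \int_W |\nabla h|^2 \omega_{FS} - 2 \int_W h^2 \omega_{FS} =: Q_{\id}(h,h),
\]
where $Q_{\id}$ is the quadratic form associated to the identity map on $\ccc\pp^1$, noting $\|\nabla\id\|^2 = 2$ with respect to $\omega_{FS}$.

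The hemisphere hypothesis is used to produce a $W$ that is ``large enough'' for $Q_{\id}$ to admit a test function with strictly negative value. After composing $\varphi$ with an automorphism of $\ccc\pp^1$ (which leaves the index unchanged), we may assume all branched points lie in the open northern hemisphere $\{|\zeta|<1\}$. Since the branch set is finite, there exists $\epsilon > 0$ with all branched points contained in $\{|\zeta|\leq 1-\epsilon\}$. Set
\[
W = \{\zeta \in \ccc\pp^1 : |\zeta| > 1-\epsilon/2\}.
\]
Then $W$ is a topological disc (hence simply connected), is disjoint from the branch locus, and strictly contains the open southern hemisphere $H_S = \{|\zeta|>1\}$. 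The first Dirichlet eigenvalue of $-\Delta$ on $H_S$ in the round metric equals $2$: its eigenfunction is the restriction of the height function $\frac{1-|\zeta|^2}{1+|\zeta|^2}$, a degree-one spherical harmonic vanishing on the equator. By strict domain monotonicity of the Dirichlet spectrum---since $W \setminus H_S$ has positive measure---one has $\lambda_1(W) < 2$. Hence there exists $h \in C_c^\infty(W)$ with $\int |\nabla h|^2 \omega_{FS} < 2 \int h^2 \omega_{FS}$, i.e.\ $Q_{\id}(h,h) < 0$.

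Fixing such an $h$ and the decomposition $\varphi^{-1}(W) = V_1 \sqcup \cdots \sqcup V_d$, the functions $\widetilde{h}_1, \ldots, \widetilde{h}_d$ are smooth on $\Sigma$ (since $h$ has compact support in $W$) and have pairwise disjoint supports, so they are linearly independent. By the pullback identity each satisfies $Q_\varphi(\widetilde{h}_i,\widetilde{h}_i) = Q_{\id}(h,h) < 0$, and disjointness of supports gives $Q_\varphi(\widetilde{h}_i,\widetilde{h}_j) = 0$ for $i \neq j$. Therefore $Q_\varphi$ is negative-definite on the $d$-dimensional subspace spanned by $\widetilde{h}_1, \ldots, \widetilde{h}_d$, and the min-max principle yields $\ind L_\varphi \geq d = \deg\varphi$.

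The main delicate step is the strict inequality $\lambda_1(W) < 2$, which is precisely where the \emph{open} hemisphere hypothesis enters: finiteness of the branch set together with openness of the hemisphere produces the positive margin $\epsilon$, and hence an open set $W$ strictly containing a closed hemisphere. Had the branched points merely been allowed on the closure of a hemisphere, the best choice would be $W$ equal to the open complementary hemisphere, for which $\lambda_1 = 2$ exactly and the argument breaks down. Everything else is standard pullback bookkeeping using the conformal invariance of $Q_\varphi$ on a Riemann surface.
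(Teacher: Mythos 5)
Your proof is correct, and it takes a genuinely different route from the paper. The paper passes to the singular pullback metric $\omega_\varphi=\varphi^*\omega_{FS}$, so that the operator becomes $-\Delta_\varphi-2$, decomposes $\Sigma$ into $\varphi^{-1}(N)$ ($\deg\varphi$ disks, each isometric to a hemisphere) and $\varphi^{-1}(S)$, and then invokes the decomposition and eigenvalue-counting lemmas of Montiel--Ros (Lemmas 11 and 12 of \cite{montiel1991}) to conclude. You instead build an explicit $\deg\varphi$-dimensional subspace on which $Q_\varphi$ is negative definite: the pullback identity $Q_\varphi(\widetilde{h}_i,\widetilde{h}_i)=Q_{\id}(h,h)$ is right (conformal invariance of the Dirichlet energy plus $\|\nabla\varphi\|^2\widehat{\omega}=2\varphi^*\omega_{FS}$), the trivialization of $\varphi^{-1}(W)$ over a simply connected $W$ avoiding the critical values is right, and the strict inequality $\lambda_1(W)<2$ via strict domain monotonicity past the closed hemisphere is exactly the point where the openness of the hemisphere is consumed --- a subtlety that the paper delegates to the Montiel--Ros counting lemmas (the hemisphere itself has first Dirichlet eigenvalue exactly $2$, so some such strictness argument is unavoidable), and which you handle cleanly and self-containedly. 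What your approach buys is independence from the cited lemmas and from any discussion of the variational theory for the singular metric; what the paper's approach buys is brevity and a framework (Dirichlet--Neumann bracketing in the pullback metric) that also underlies the upper bound of Tysk quoted just above. One small imprecision: you should reduce to the standard hemisphere by a rotation (an isometry of $\omega_{FS}$, which leaves $\|\nabla\varphi\|^2$ and hence $L_\varphi$ literally unchanged) rather than by a general automorphism of $\ccc\pp^1$, since a non-isometric M\"obius transformation changes the potential; a rotation suffices, so this does not affect the argument.
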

\begin{proof}
As mentioned in Subsection \ref{solutions}, the index $\ind~L_\varphi$ does not depend on the choice of metrics on $\Sigma$. In particular, we may use the singular metric $\omega_\varphi:=\varphi^*\omega_{FS}$ to calculate $\ind~L_\varphi$, because the eigenvalues of the corresponding operator
\[-\Delta_{\varphi} +\|\nabla\varphi\|^2_{\omega_\varphi} = -\Delta_\varphi+2\]
can still be defined by variational characterization \cite{tysk1987}, where $\Delta_\varphi$ is the singular Laplace-Beltrami operator associated to $\omega_\varphi$. Consequently we only need to estimate the number of eigenvalues of $-\Delta_\varphi$ less than 2.

Without loss of generality, we may assume all branched points of $\varphi$ lie in the south hemisphere $S$. Let $N$ be the north hemisphere, then we know that $\varphi^{-1}(N)$ consists of $\deg\varphi$ copies of disks and $\varphi^{-1}(S)$ is a connected compact Riemann surface with $\deg\varphi$ copies of $S^1$ as its boundary. Applying \cite[Lemma 12]{montiel1991} to this decomposition of $\Sigma$, and using the counting result from \cite[Lemma 11(b)]{montiel1991}, we prove the proposition.
\end{proof}
\begin{rmk}~\\
The hemisphere condition is necessary for the above proposition, otherwise we have the counterexample due to Ross \cite{ross1992}, where he showed that the Schwarz' P and D surfaces (genus 3 minimal surfaces in flat $T^3$) have $\ind~L_\varphi=1$. Proposition \ref{index} can also be proved by using \cite[Lemma 20]{montiel1991}.
\end{rmk}

\subsection{Some Uniqueness Results}

In general, it is hard to establish uniqueness results about the Strominger system. One typically needs to assume that the solutions take the form of a certain ansatz: see for example the Fu-Yau solutions. In our case, one can easily check that any warped product metric on $X$ which solves the conformally balanced equation (\ref{cb}) must live in the family (\ref{ansatz}). Therefore in this subsection, we only consider solutions to the Strominger system on generalized Calabi-Gray manifolds which are of the form (\ref{ansatz}).

As we have seen in previous subsections, our solution depends on a function $u\in\ker L_\varphi$ such that $u$ is positive at all ramification points of $\varphi:\Sigma\to\ccc\pp^1$. Suppose $\dim_\rr\ker L_\varphi=n\geq 3$ and choose a basis $\{u_1,\dots,u_n\}$ of $\ker L_\varphi$. Let
\[D=\{u\in\spann\{u_1, u_2, \dots, u_n\}:u>0\textrm{ at branched points of }\varphi\}.\]
The set $D$ forms an open convex polyhedral cone in $\ker L_\varphi$. Consider the map $T:D\to\rr^n$ given by
\[T(u)=\left(\int_\Sigma e^fu_1\hat{\omega},\dots,\int_\Sigma e^fu_n\hat{\omega}\right),\]
where $e^f$ is determined from $u$ by the relation
\begin{equation} \label{e^f-to-u}
e^f=\frac{1}{2}\left(u+\sqrt{u^2-2\alpha'\kappa}\right)>0.
\end{equation}
Any $u \in D$ can be expressed as $u=t^i u_i$, and we will use the coordinates $t^i$ to compute the Jacobian of $T$.
\begin{prop}\label{convex}~\\
There exists a strictly convex function $F:D\to\rr$ such that $T(u)=\nabla F(u)$. As a consequence, $T$ is a diffeomorphism from $D$ to its image.
\end{prop}
\begin{proof}
We calculate the Jacobian matrix of $T$, which is given by
\[J(T)(u)=\frac{1}{2}\int_\Sigma\left(1+\frac{u}{\sqrt{u^2-2\alpha'\kappa}}\right)\begin{pmatrix}u_1^2 &\dots &u_1u_n\\ \vdots& \ddots& \vdots\\ u_nu_1& \dots& u_n^2\end{pmatrix}\hat{\omega} .\]
Since the $u_i$ are linearly independent, it follows that $J(T)$ is symmetric positive-definite. This implies, at least locally, the existence of the potential function $F$. In fact, we can write down $F$ explicitly as
\[F(u)=\frac{1}{2}\int_\Sigma \left(e^{2f}-\alpha'\kappa f\right)\hat{\omega},\]
where we use (\ref{e^f-to-u}) to relate $e^f$ to $u$. The second part of the proposition follows directly from properties of strictly convex functions.
\end{proof}

As our ansatz solves the conformally balanced equation (\ref{cb}), the closed 4-form $\|\Omega\|_\omega\cdot\omega^2$ defines a de Rham cohomology class. In fact, this class is given by
\[\begin{split}\left[\|\Omega\|_{\omega_f}\cdot\omega_f^2\right]&=2\sqrt{2}\left([\vol_M]+[\omega_I][e^f\alpha\hat{\omega}]
+[\omega_J][e^f\beta\hat{\omega}]+[\omega_K][e^f\gamma\hat{\omega}]\right)\\ &\in H^4(X;\rr)\end{split}\]
under the K\"unneth isomorphism
\[H^*(X;\rr)\cong H^*(M;\rr)\otimes H^*(\Sigma;\rr),\]
which is determined by the integrals
\[\int_\Sigma e^f\alpha\widehat{\omega},\quad \int_\Sigma e^f\beta\widehat{\omega}, \quad\textrm{ and }\quad \int_\Sigma e^f\gamma\widehat{\omega}.\]
One motivation for looking for solutions inside a prescribed class
\[\left[\|\Omega\|_{\omega}\cdot\omega^2\right] \in H^4(X;\mathbb{R})\]
is that this class is preserved by the Anomaly flow \cite{phong2018b}.

We know that there is a 3-dimensional subspace $V_\varphi\subset\ker L_\varphi$, spanned by $\{\alpha,\beta,\gamma\}$, corresponding to rotational Jacobian fields. It is expected, as partially supported by results in \cite{nayatani1993}, that for a generic vanishing spinorial pair $(\Sigma,\varphi)$, the equality $\dim\ker L_\varphi=\dim V_\varphi=3$ holds. Assuming so, combining these observations with Proposition \ref{convex} leads to the following uniqueness result:
\begin{cor}~\\
Assuming the ansatz (\ref{ansatz}) and $\dim\ker L_\varphi=3$, then there is at most one solution to the Strominger system in each de Rham cohomology class in $H^4(X;\rr)$.
\end{cor}

\bigskip
\noindent Department of Mathematics, Columbia University, New York, NY 10027, USA

\bigskip
\noindent tfei@math.columbia.edu\\
zjhuang@math.columbia.edu\\
picard@math.columbia.edu

\end{document}